 \theoremstyle{plain}
 \newtheorem{theorem}{Theorem}
\newtheorem{proposition}{Proposition}
\newtheorem{lemma}{Lemma}
 \newtheorem{remark}{Remark}
\newtheorem{example}{Example}
 \theoremstyle{definition}
 \newtheorem{problem}{Problem}
\def \coes{\c c\~oes }
\def \ii{\'\i }
\def \cao{\c c\~ao }
\def \sao{s\~ao }
\def \ao{\~ao }
 \title[Centers of inscribed circles in  an elliptic billiard  ]{Centers of  inscribed circles in triangular orbits of an elliptic billiard}
 \author{  Ronaldo A. Garcia$^{*} $ }
  \thanks{* \; Partially supported by PRONEX/CNPq/FAPEG}
\begin{document}
 \maketitle
 \textwidth=125 mm  
  \textheight=195 mm

  \begin{abstract}  The locus of   centers of   inscribed  circles in triangles,  the 3-periodic orbits of an elliptic  billiard,   is also an ellipse.
 In this work we obtain the canonical equation of this ellipse,   complementing  the  previous results obtained by 
  O. Romaskevich in \cite{olga}.
 
\end{abstract}
  
\vskip .2cm

    \noindent{\bf MSC (2010). } 
  Primary 51M04  Secondary 	37D50;	53A15, 14H50,	14E05  
   \vskip .2cm
   
  \noindent {\bf Keywords.}  Elliptic billiard, triangular orbit, inscribed circle, incenter locus.

 \section{Introduction }
 
 Let 
 $\mathcal E$ be an ellipse and a point  $ p_1\in \mathcal{E}$.  Consider the triangle
  $\Delta(p_1)=\{p_1,p_2,p_3\} $  inscribed in  $\mathcal E$  
  that defines the 3-periodic of the billiard 
   (the normal vector of the ellipse at the $p_i$ is a bisectrix of $\Delta(p_1)$), associated to $\mathcal E$. See Fig. \ref{fig:bi1}. 
   
   Next consider the geometric locus
  $\mathcal{E}_c=\{C(p_1), p_1\in \mathcal E\} $, where  $C(p_1)$ 
  is defined as the center of the inscribed circle in the triangle
    $\Delta(p_1)$. 
    
    It is well known, by Poncelet's Theorem, that the sides of the  family of triangles 
  $\Delta(p_1)$ are tangent to a smaller ellipse $\mathcal{E}_1$, confocal to the ellipse $\mathcal E$. See \cite{taba} and Fig. \ref{fig:elip_poncelet}.
 
 The main goal of this work is to obtain, using only techniques of  real analytic and differential geometry, that 
  $\mathcal{E}_c$ is an ellipse. We  present its canonical algebraic equation and also a parametrization which is a  triple covering.  Other geometrical related facts are also considered.
 
  The fact that 
  $\mathcal{E}_c$ is an ellipse were established by  O. Romaskevic, see   \cite{olga}, using techniques of complex algebraic geometry.  See section 1 of the mentioned paper. Also we observe that in the work \cite{olga} no information about the shape of $\mathcal{E}_c$ was achieved.  
 
 Our strategy is to obtain the  parametrization of the geometric locus  $\mathcal{E}_c$ 
 and show that is a parametrized curve having  positive constant affine curvature (therefore is an ellipse).
 This is the essence of   Proposition  \ref{prop:kafim}.
 In  Theorem  \ref{th:cc} we describe explicitly the axes of  ellipse
  $\mathcal{E}_c$ and its canonical equation. Also it is worth to mention that the two ellipses $\mathcal{E}_1$ and $\mathcal{E}_c$ are similar (the axes are proportional).
  
  We make use of algebraic computational manipulators to perform the calculations.
  They are long in general, but  can be checked  by hand.

 \section{Preliminaries}
 
Consider  an ellipse  $\mathcal E$ given implicitly by  $h(x,y)=x^2/a^2+y^2/b^2-1=0$. To fix the arguments it will be supposed that  
 $a>b>0$  and so  the foci of $\mathcal E$ are given by $(\pm \sqrt{a^2-b^2} ,0)=(\pm c, 0).$

Given a point  $p_1=(x_1,y_1)\in \mathcal E$ we have that
$T_1=(-\frac{y_1}{b^2},\frac{x_1}{a^2})$ and  $N_1=(-\frac{x_1}{a^2},-\frac{y_1}{b^2})$ is a positive orthogonal frame (basis) of   $\mathbb R^2$.

\begin{proposition}\label{prop:3bilhar} For any point $p_1=(x_1,y_1)\in \mathcal E$  there exists a single
triangle   $\Delta(p_1)= \{p_1,p_2,p_3\}$ inscribed in   $\mathcal E$ such that $\Delta(p_1)$ is a  billiard (3-periodic orbit) inscribed in  $\mathcal E$. See Fig. \ref{fig:bi1}.

 \begin{figure}[htbp]\label{fig:bi1}
\begin{center}
 \def\svgwidth{0.95\textwidth}
    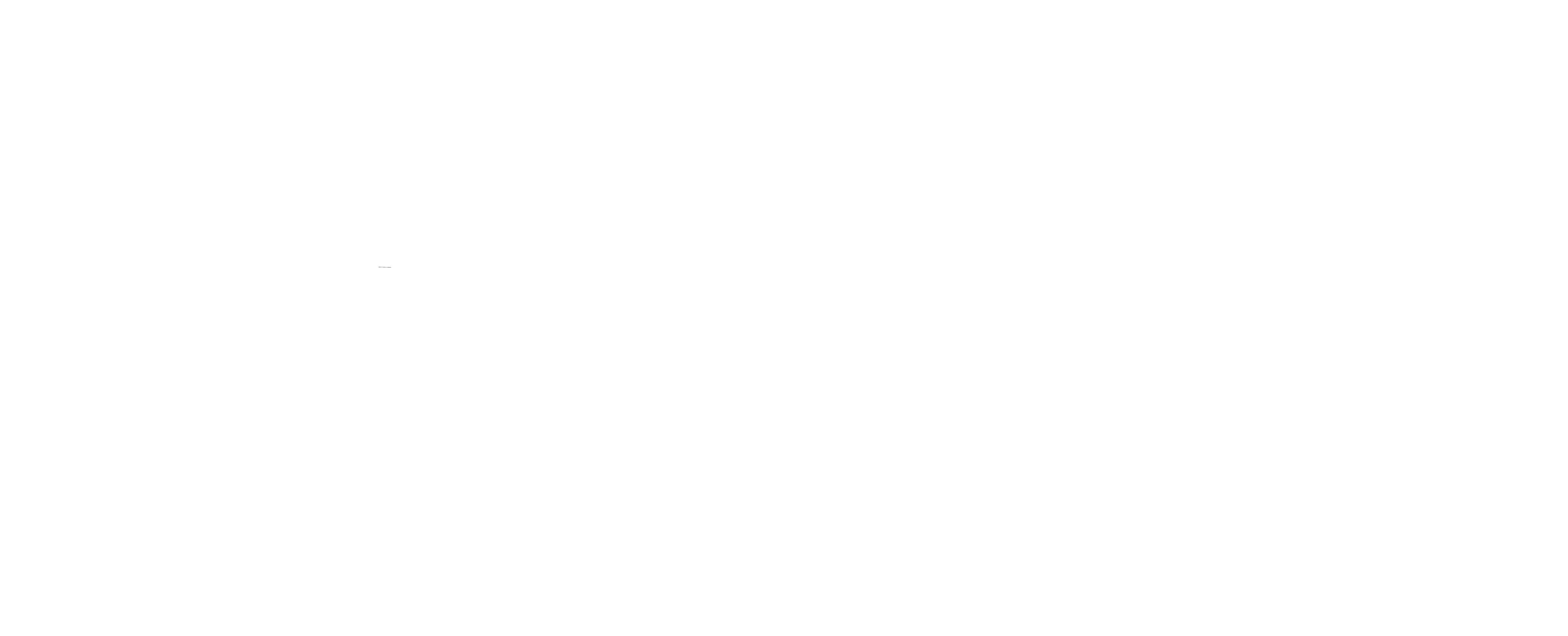
       \caption {Triangular billiard orbit inscribed in the ellipse   $\mathcal E$ (left) and center $C(p_1)$ of the inscribed circle in the triangle
           $\Delta(p_1)$ (right).}
   \end{center}
 \end{figure}
 
 \end{proposition}

\begin{proof} The proof  consists in the calculations of  angles of incidence and reflection  that  the triangle $\Delta=\{p_1,p_2,p_3\}$ (the 3-periodic orbit), positively oriented,  makes at its apexes with the normal vector of the ellipse.
See Fig. \ref{fig:bi1}.

Let $\alpha$ be the angle of incidence and reflection at the point $p_1$. So, we have defined two directions   $d_{12}=\sin\alpha T_1+\cos \alpha N_1$  and $d_{13}=-\sin\alpha T_1+\cos \alpha N_1$.

Evaluating the intersection of the straight lines 
 $p_1+td_{12}$ and $p_1+td_{13}$  with the ellipse  $\mathcal E$ we obtain the points  $p_2$ and  $p_3$, defining a triangle  $\Delta=\{p_1,p_2,p_3\}$ inscribed in the ellipse.

We have that  $p_2=(x_2, y_2)$, where  $x_2=\frac{p_{2x}}{q_2}$ and  $y_2=\frac{p_{2y}}{q_2}$ are given by:

 \begin{equation}\aligned 
p_{2x}=&-{b}^{4} \left(  \left(   a^2+{b}^{2}\right)\cos^{2}t -{a}^{2}  \right) x_1^{3}-2{a}^{6} \,\cos t \sin  t  y_1^{3}\\
+&{a}^{4} \left(  ({a
}^{2}-3\, {b}^{2}) \cos^{2} t +{b}^{2}
 \right) {x_1}\,y_1^{2}-2\,{a}^{4}{b}^{2} \cos t  \,\sin t  x_1^{2}{y_1}
\\
p_{2y}=& 2{b}^{6} \,\cos t\sin t\,   x_1^{3}-{
a}^{4}  \left(  \left(   a^2+{b}^{2}\right)\cos^{2}t -{b}^{2}  \right)  y_1^{3}\\
+& 2\,{a}^{2} {b}^{4}\cos t \sin
 t\; {x_1} y_1^{2} +{b}^{4}\left(  ({b
 }^{2}-3\, {a}^{2}) \cos^{2} t +{a}^{2}
  \right) x_1^{2}{y_1}
\\
q_2=&{b}^{4} \left( a^2-(a^2-b^2)\cos^2t  \right)
x_1^{2}+{a}^{4} \left(  {b}^{2}+(a^2-b^2)\cos^2t 
 \right) y_1^{2}\\
 -&2\, {a}^{2}{b}^{2} \left({a}^{2} -{b}^{2} \right)\cos t\sin t \; {x_1}\,{
y_1}
\endaligned
\end{equation}

Also, we have that $p_3=(x_3, y_3)$, where $x_3=\frac{p_{3x}}{q_3}$ and $y_3=\frac{p_{3y}}{q_3}$ are given by:

 \begin{equation}\aligned 
p_{3x}=& {b}^{4} \left( {a}^{2}- \left( {b}^{2}+{a}^{2} \right) \right)
 \cos^{2}t  x_1^{3}+2\,
 \cos t\, {a}^{6}\sin t\, y_1^{3}\\
 +&{a}^{4} \left( 
  \cos^{2}t \left( {a}^{2}-3\,{b}^{2}
 \right) +{b}^{2} \right) { x_1}\, y_1^{2}+2\,{a}^{4}{b}^{2} \cos  t\sin t\,   x_1^{2}{ y_1}
\\
p_{3y}=& -2\,\cos t {b}^{6}\sin t\, x_1^{3}+
{a}^{4} \left( {b}^{2}- \left( {b}^{2}+{a}^{2} \right)   \cos^{2} t\right)\,  y_1^{3}\\
-&2\,{a}^{2}  {b}^{4}\cos
 t \sin t\,  x_1 y_1^{2}+
{b}^{4} \left( {a}^{2}+ \left( {b}^{2}-3\,{a}^{2} \right)  \left( \cos
 t \right) ^{2} \right) {{ x_1}}^{2}{ y_1}
\\
q_3=& {b}^{4} \left( {a}^{2}- \left(a^2 -{b}^{2}  \right)   \cos^{2}t  \right) x_1^{2}+{a}^{4} \left( {b}^{2}+ \left( a^2-{b}^{2}  \right)  \cos^{2}t \right)  y_1^{2}\\
&+2\,{a}^{2}{b}^{
2} \left( {a}^{2}-{b}^{2} \right) \cos t \sin t\, { x_1}\,{ y_1}
\endaligned
\end{equation}

Therefore  the segments $p_{12}=p_2-p_1$ and   $p_{13}=p_3-p_1$ are, respectively, oriented by $d_{12}=\sin\alpha T_1+\cos \alpha N_1$ and $d_{13}=-\sin\alpha T_1+\cos \alpha N_1$.

Performing the calculations of  angles of incidence and reflection  at the points
  $p_2$ and $p_3$ and supposing that the triangle   $\Delta$ is a 3-periodic orbit of the billiard we obtain the two equations,

$$\frac{ \langle p_1-p_2, N_2\rangle }{ |p_1-p_2| | N_2| }=\frac{ \langle p_3-p_2, N_2\rangle }{ |p_3-p_2| | N_2| },\;\;
\frac{ \langle p_1-p_3, N_3\rangle }{ |p_1-p_3| | N_3| }=\frac{ \langle p_2-p_3, N_3\rangle }{ |p_2-p_3| | N_3| }.$$

A long and straightforward  calculation,  corroborated by algebraic symbolic computation, shows that the above equations have a common solution defined by:

{\small   
\begin{equation}\label{eq:coa}
\aligned 
%
       c^4& \left(  
      {b}^{4}x_1^2+ {a}^{4} y_1^2\right)^{2}   \cos^{4} \alpha+2\,{a}^{4}{b}^{4} \left( {a}^{2}+{b}^{2}
      \right)  \left(  {b}^{4}x_1^2+ {a}^{4} y_1^2\right) 
        \cos^{2}\alpha-3\,{a}^{8}{b}^{8}=0.\\
           c^4  &|T_1|^4  \cos^{4} \alpha+2\, \left( {a}^{2}+{b}^{2}
          \right)   |T_1|^2
            \cos^{2}\alpha-3\, =0, \;\; |T_1|^2=\frac{x_1^{ 2}}{a^4} +\frac{y_1^{ 2}}{b^4}.\\
        \endaligned 
\end{equation}}
 
 The equation
  \eqref{eq:coa} has a single solution $\alpha\in (0,\frac{\pi}2)$
  and therefore defines uniquely the triangle $\Delta(p_1)$.
This ends the proof.
\end{proof}

By Poncelet's Theorem, see for example \cite{drag}, \cite{gla}, \cite{grif}, 
 and \cite{taba}, the 3-periodic orbits given by the triangles $\Delta(p_1)$ are tangent to an ellipse  $\mathcal{E}_1$ which is confocal with the   ellipse $\mathcal{E}$.

Concretely we have the following result.

\begin{proposition}\label{prop:elipponcelet}
The smaller ellipse 
  $\mathcal{E}_1$
  of the 3-periodic billiard, confocal to the ellipse
  $\mathcal E$,   is given by:

\begin{equation}
\label{eq:eponce}
\aligned h_1(x,y)=&  \frac{x^2}{a_1^2}+\frac{y^2}{b_1^2}-1=0,\\
a_1=&-a (b^2-  \sqrt {{b}^{4}-{a}^{2}{b}^{2}+{a}^{4}})/(a^2-b^2)>0\\
%
 b_1=&b (a^2- \sqrt {{b}^{4}-{a}^{2}{b}^{2}+{a}^{4}})/(a^2-b^2)>0\\
%
%
\endaligned
\end{equation}

 
 \begin{figure}[htbp]\label{fig:elip_poncelet}
\begin{center}
  \def\svgwidth{0.75\textwidth}
     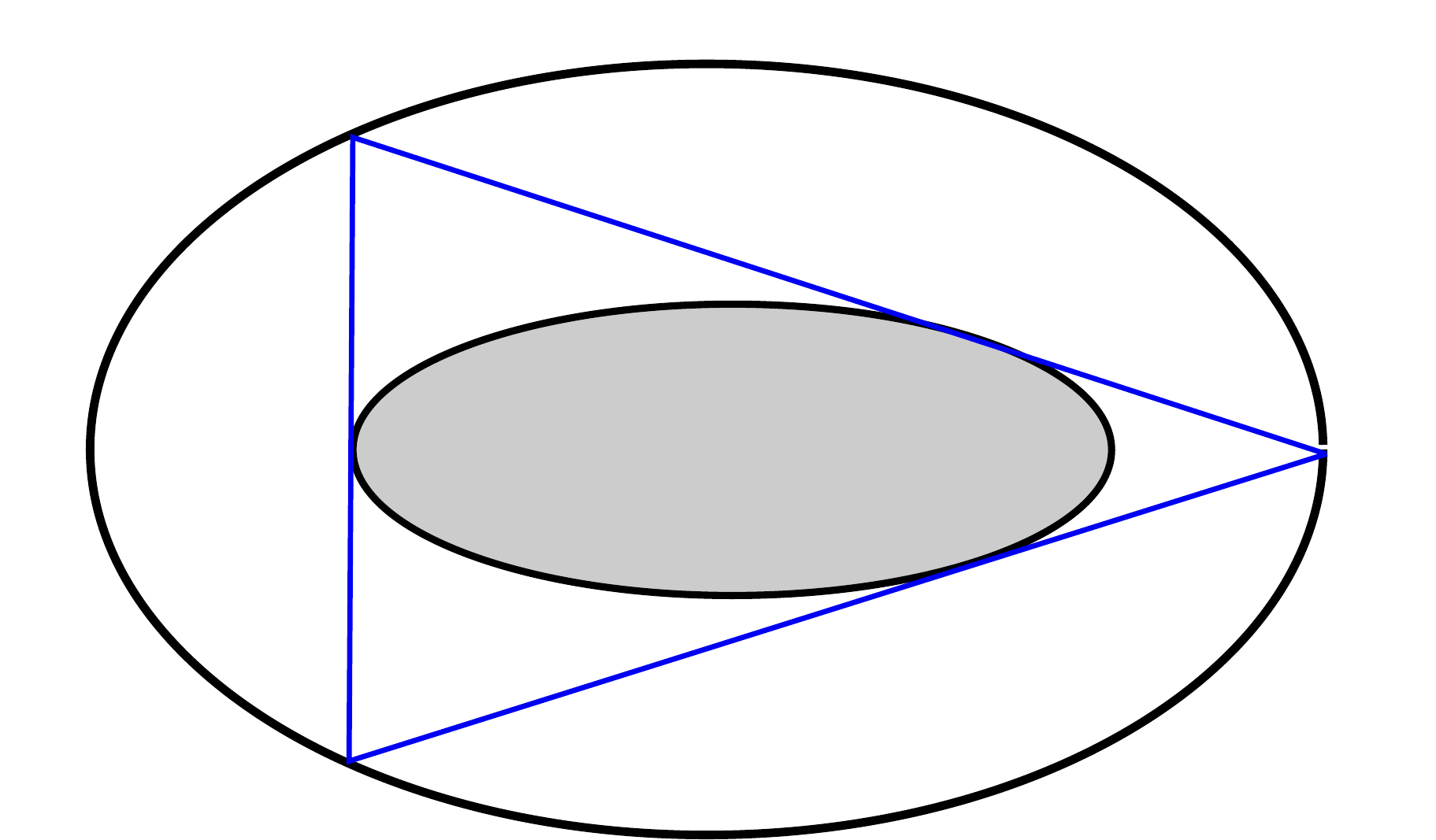
       \caption { Confocal ellipses $\mathcal E$ and  $\mathcal{E}_1$ and a triangular orbit
       $\{p_1,p_2,p_3\}$.}
   \end{center}
 \end{figure}
 
\end{proposition}

\begin{proof} It follows directly from Proposition   \ref{prop:3bilhar}, obtaining the triangles (3-periodic  orbits) generated by the points $(\pm a, 0)$ and  $(0,\pm b)$ and computing   the  smaller ellipse.
\end{proof}

\begin{lemma}\label{lem:cc}  The center of  the  inscribed circle in the triangle
  $\Delta=\{p_1,p_2,p_3\}$, $p_i=(x_i, y_i)$,   is given by $(x_c,y_c)$, where:

\begin{equation}
\aligned 
x_c=& \frac 12\,{\frac { (x_1^2+y_1^2)(y_2-y_3)+ (x_2^2+y_2^2)(y_3-y_1)+  (x_3^2+y_3^2)(y_1-y_2)}{ x_1(y_2-y_3)+x_2(y_3-y_1)+x_3(y_1-y_2)}},\\
y_c=&  \frac 12\,{\frac{(x_1^2+y_1^2)  (x_3 - x_2)+ (x_2^2+y_2^2)(x_1-x_3)+   (x_3^2+y_3^2) (x_2-x_1) }{ y_1(x_3-x_2)+y_2(x_1-x_3)+y_3(x_2-x_1)}}
\endaligned
\end{equation}

\end{lemma}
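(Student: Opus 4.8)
The plan is to derive the coordinates of $C=(x_c,y_c)$ straight from the defining property of the incenter, namely that it is the unique point in the interior of $\Delta$ lying at equal distance from the three side-lines (this common distance being the inradius), equivalently the common intersection of the three internal angle bisectors. First I would write the sides as lines: $\ell_1$ through $p_2,p_3$, $\ell_2$ through $p_3,p_1$, and $\ell_3$ through $p_1,p_2$. Taking a normal vector $n_i$ to $\ell_i$ and a point $q_i\in\ell_i$, the signed distance from an arbitrary point to $\ell_i$ is the affine expression $\delta_i(x,y)=\langle (x,y)-q_i,\,n_i\rangle/|n_i|$, and the incenter is cut out by the two equations $\delta_1=\delta_2$ and $\delta_2=\delta_3$, with the signs fixed so as to select the interior bisectors rather than the external ones.

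For the purpose of matching the stated quotients it is more efficient to start from the barycentric form $C=\dfrac{a\,p_1+b\,p_2+c\,p_3}{a+b+c}$, where $a=|p_2-p_3|$, $b=|p_3-p_1|$ and $c=|p_1-p_2|$ are the lengths of the sides opposite $p_1,p_2,p_3$. Substituting $p_i=(x_i,y_i)$ produces candidate expressions for $x_c$ and $y_c$, and the task reduces to identifying them with the quotients in the statement. I note that the two denominators there coincide and equal $x_1(y_2-y_3)+x_2(y_3-y_1)+x_3(y_1-y_2)$, which is twice the signed area of $\Delta$; this is the natural common denominator to aim for when clearing fractions. The identity to be established then becomes a polynomial relation in the coordinates, which I would confirm by direct expansion and by symbolic computation, in the same spirit as the calculations already invoked in Proposition~\ref{prop:3bilhar}.

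The main obstacle I anticipate is the management of the radicals. The weights $a,b,c$ (equivalently the normalizing factors $|n_i|$ in the distance approach) carry square roots, whereas the target expressions are rational in the $x_i,y_i$. The way I would handle this is to square the equalities $\delta_1=\delta_2$ and $\delta_2=\delta_3$; since each $|n_i|^2$ is itself a polynomial in the coordinates, squaring converts the equidistance conditions into honest polynomial equations, each describing the pair of angle bisectors at a vertex. Intersecting the two resulting loci yields four candidate points, the incenter together with the three excenters, and the delicate step is to track the orientation signs so as to retain precisely the interior solution. Carrying that selected branch through the simplification to reach the closed form claimed for $(x_c,y_c)$, without being diverted onto an excenter, is the part I expect to demand the most care.
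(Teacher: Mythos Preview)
Your proposal has a genuine gap, and the obstacle you yourself flagged is the tell. The displayed quotients in the lemma are \emph{rational} in the $x_i,y_i$; they are in fact the classical formulas for the \emph{circumcenter} of $\Delta$, obtained from the linear system
\[
|C-p_1|^2=|C-p_2|^2,\qquad |C-p_1|^2=|C-p_3|^2,
\]
which after cancellation of $|C|^2$ is linear in $(x_c,y_c)$ with Cramer determinant $x_1(y_2-y_3)+x_2(y_3-y_1)+x_3(y_1-y_2)$. By contrast, the incenter in barycentrics $(a:b:c)$ is genuinely irrational in the coordinates, and no amount of squaring the equidistance conditions $\delta_i=\delta_j$ will collapse those radicals for a \emph{generic} triangle. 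So your plan, carried out correctly, would produce the incenter and would \emph{not} match the stated $(x_c,y_c)$; the difficulty you anticipated is not a technical nuisance but an obstruction.

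The paper's own proof is consistent with the formulas rather than with the word ``inscribed'': it intersects the two lines
\[
\tfrac{p_1+p_2}{2}+t\,(p_1-p_2)^{\perp},\qquad \tfrac{p_1+p_3}{2}+t\,(p_1-p_3)^{\perp},
\]
i.e.\ the perpendicular bisectors of the sides (what the text calls ``medial lines''), and this immediately gives the displayed circumcenter expressions. In short, the lemma's label ``inscribed circle'' is a misnomer; the content is the circumcenter, and the one-line derivation above (or the perpendicular-bisector intersection) is all that is needed. Your barycentric/angle-bisector route targets a different point and cannot reach the stated identity.
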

\begin{proof} Direct calculation. We obtain the intersection between   medial lines   $(p_1+p_2)/2+t (p_2-p_3)^\perp$ and  $(p_1+p_3)/2+t (p_1-p_3)^\perp$, bisectors of triangle $\Delta(p_1)$,  where $(p_i-p_j)^\perp $ is a vector orthogonal to $p_i-p_j$.
\end{proof}

\begin{proposition} \label{prop:ccpa} 
Let   $C(p_1)$ the center of  inscribed circle in the triangle   $\Delta(p_1)$
that defines the 3-periodic orbit of the billiard  associated to the confocal ellipses
 $\mathcal E$ and $\mathcal{E}_1$.  Then the locus   $\{ \ C(p_1), \;p\in \mathcal{E}\}$ is parametrized by  
 $ (x_c,y_c)$, 
 where:
 
 \begin{equation}
 \label{eq:parac}
\aligned x_c=& 
  \frac{x_1}{a}\left( \frac{A^3 x_1^2+ a_{02}
   y_1^2}{A^2x_1^2+B^2 y_1^2}\right), \;\;\;
 y_c= 
  \frac{y_1}{b}\left(\frac{ b_{20}
   x_1^2+B^3y_1^2}{A^2x_1^2+B^2 y_1^2}\right),\\
   A =& \,{\frac {{a}^{2}-\sqrt {{a}^{2}c^2+{b}^{4}}}{2a}},\;\; B =\,{\frac {{b}^{2}-\sqrt { {a}^{2}{c}^{2}+{b}^{4}}}{2b}},\\
  a_{02}(a,b)=&  {\frac {a \left(  \left( 3\,{a}^{3}+a{b}^{2} \right) A -{b}^{2}c^2 \right) }{4{b}^{2}}},\;\;\;
   b_{20}(a,b)=   \,{\frac {b \left(\left(   {a}^{2}b+3\,{b}^{3} \right) B+{a}^{2
     }{c}^{2} \right) }{4{a}^{2}}}.
  \endaligned \end{equation}

\end{proposition}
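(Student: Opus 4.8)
The plan is to compose the two explicit formulas already established and then eliminate the auxiliary angle $\alpha$. By Proposition \ref{prop:3bilhar}, for a fixed $p_1=(x_1,y_1)\in\mathcal E$ the remaining vertices $p_2=(x_2,y_2)$ and $p_3=(x_3,y_3)$ of the orbit $\Delta(p_1)$ are the displayed rational expressions in $x_1,y_1$ and $\cos\alpha,\sin\alpha$ (the parameter $t$ there standing for $\alpha$), where $\alpha\in(0,\tfrac{\pi}{2})$ is the root of \eqref{eq:coa}; by Lemma \ref{lem:cc}, $C(p_1)=(x_c,y_c)$ is an explicit rational function of $x_1,y_1,x_2,y_2,x_3,y_3$. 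Substituting the former into the latter exhibits $(x_c,y_c)$ as a rational function of $x_1,y_1,\cos\alpha,\sin\alpha$ subject to the two constraints $h(x_1,y_1)=0$ and \eqref{eq:coa}. It is worth noting in advance that the reflections $(x,y)\mapsto(-x,-y)$, $(x,y)\mapsto(x,-y)$ and $(x,y)\mapsto(-x,y)$ preserve $\mathcal E$ and send a $3$-periodic orbit to a $3$-periodic orbit, hence carry $C(p_1)$ to the corresponding image point; this already forces a parametrization of the shape $x_c=x_1\,\varphi$, $y_c=y_1\,\psi$ with $\varphi,\psi$ even in each of $x_1$ and $y_1$, of which \eqref{eq:parac} is a convenient homogeneous writing. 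So the real content is the identification of $\varphi$ and $\psi$.

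The elimination of $\alpha$ is the technical heart. Writing \eqref{eq:coa} as $c^4|T_1|^4\cos^4\alpha+2(a^2+b^2)|T_1|^2\cos^2\alpha-3=0$ with $|T_1|^2=x_1^2/a^4+y_1^2/b^4$, and using the identity $(a^2+b^2)^2+3c^4=4(a^2c^2+b^4)$, one gets
\[
\cos^2\alpha=\frac{2\sqrt{a^2c^2+b^4}-(a^2+b^2)}{c^4\,|T_1|^2}.
\]
Thus the only irrationality entering the computation is the single radical $\sqrt{a^2c^2+b^4}=\sqrt{a^4-a^2b^2+b^4}$ --- exactly the quantity out of which $A$, $B$, $a_{02}$ and $b_{20}$ are built; indeed $2aA=a^2-\sqrt{a^2c^2+b^4}$ and $2bB=b^2-\sqrt{a^2c^2+b^4}$, so that $c^4|T_1|^2\cos^2\alpha=-2(aA+bB)$. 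That nothing worse than $\cos^2\alpha$ remains is a parity observation: interchanging $p_2$ and $p_3$ amounts to $\alpha\mapsto-\alpha$ (it swaps the directions $d_{12}$ and $d_{13}$), the point $C(p_1)$ does not depend on the labelling of the vertices, and --- since the expressions of Proposition \ref{prop:3bilhar} are quadratic forms in $(\cos\alpha,\sin\alpha)$ --- the symmetric combinations $x_2+x_3$, $x_2x_3$, $y_2+y_3$, $y_2y_3$, $x_2y_2+x_3y_3$ in terms of which $(x_c,y_c)$ can be written are all rational in $x_1,y_1,\cos^2\alpha$. Substituting the value of $\cos^2\alpha$ above, and then $y_1^2=b^2(1-x_1^2/a^2)$, turns $(x_c,y_c)$ into a rational function of $x_1,y_1$ alone.

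The remaining step is a large simplification, best carried out with a symbolic algebra system, and this is where the main difficulty lies: the intermediate expressions are bulky, and one must consistently select the branch of $\sqrt{a^2c^2+b^4}$ corresponding to the physical root $\alpha\in(0,\tfrac{\pi}{2})$ of \eqref{eq:coa}, so that $C(p_1)$ is the genuine center and not a spurious solution. As safeguards against algebraic slips I would verify a few special positions and symmetries: the vertices $(\pm a,0)$ and $(0,\pm b)$ of $\mathcal E$, whose orbits are isosceles, must be sent by the resulting formula to $(\pm A,0)$ and $(0,\pm B)$ respectively --- which one checks by computing directly the centers of those symmetric triangles --- and the output must transform correctly under $(x_1,y_1)\mapsto(\pm x_1,\pm y_1)$, as the symmetry discussion demands. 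Carrying out the simplification then produces exactly the expressions \eqref{eq:parac}, which completes the proof.
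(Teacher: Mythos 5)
Your proposal is correct and follows essentially the same route as the paper: compose Lemma \ref{lem:cc} with the explicit formulas for $p_2,p_3$ from Proposition \ref{prop:3bilhar}, eliminate $\alpha$ via equation \eqref{eq:coa}, and reduce the resulting rational expression in $x_1,y_1,\sqrt{a^2c^2+b^4}$ by symbolic computation. Your added details --- the closed form $\cos^2\alpha=\bigl(2\sqrt{a^2c^2+b^4}-(a^2+b^2)\bigr)/(c^4|T_1|^2)$, the parity argument showing only symmetric functions of $p_2,p_3$ (hence only $\cos^2\alpha$) enter, and the checks at $(\pm a,0)$, $(0,\pm b)$ --- merely make explicit what the paper's terse proof leaves to the computer algebra system.
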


 \begin{proof} It follows directly from    Lemma \ref{lem:cc} and  Proposition  \ref{prop:3bilhar}.
 
 We observe that
 $$\aligned x_c=&  \frac{x_1F_1}{2a^2F_3} =\frac{x_1}{2a^2} \frac{a_{20}(a,b)x_1^2+a_{02}(a,b)y_1^2}{p(a,b)x_1^2+q(a,b)y_1^2}\\
 y_c=&  \frac{y_1F_2}{2b^2F_3} =\frac{y_1}{2b^2} \frac{ b_{20}(a,b)x_1^2+b_{02}(a,b)y_1^2}{p(a,b)x_1^2+q(a,b)y_1^2}\\
 a_{20}(a,b)=& b_{02}(b,a), \;\; a_{02}(a,b)= b_{20}(b,a).
\endaligned$$

\noindent where,

\begin{equation}\label{eq:param}
\aligned
 F_1=&   \left(  {b}^{2} \left( 4\,{a}^{4}-{a}^{2}{b}^{2}+{b}^{4} \right) { 
 A_2}-{a}^{2}{b}^{2} \left( 4\,{a}^{4}-3\,{a}^{2}{b}^{2}+3\,{b}^{4} \right) \right)
 x_1^{2}\\&+{a}^{4} \left(  \left( 3\,{a}^{2}+{b}^{2}
  \right) {  A_2}-3\,{a}^{4}+{a}^{2}{b}^{2}-2\,{b}^{4} \right) y_1^{2} 
 \\
F_2=&   \left( {b}^{4} \left( {a}^{2}+3\,{b}^{2} \right) {  A_2}-{b}^{4}
 \left(2\,{a}^{4} -{a}^{2}{b}^{2}+3\,{b}^{4} \right)  \right)x_1^{2}\\
 +& \left( {a}^{2} \left({a}^{4} -{a}^{2}{b}^{2}+4\,{b}^{4}
 \right) {  A_2}-{a}^{2}{b}^{2} \left( 3\,{a}^{4}-3\,{a}^{
2}{b}^{2}+4\,{b}^{4} \right)  \right) y_1^{2} 
 \\
F_3=&  \left( 2\,{a}^{2}{b}^{2}{ A_2}-{b}^{2} \left( 2\,{a}^{4}-{a}^{2}{b}
^{2}+{b}^{4} \right)  \right) x_1^{2}\\
+& \left( 2\,{a}^{2}{b}^{2}
{  A_2}-{a}^{2} \left( 2\,{b}^{4}+{a}^{4}-{a}^{2}{b}^{2} \right) 
 \right) y_1^{2}\\
 A_2=& \sqrt{a^2(a^2-b^2)+b^4}=\sqrt{a^2c^2+b^4}
\endaligned\end{equation}
Simplifying the above equation it follows the result.
 \end{proof}

In terms of the axes of ellipses 
 $\mathcal E$ and  $\mathcal{E}_1$ we have the following proposition.
 
\begin{proposition}  Let $p_1=(x_1,y_1)\in\mathcal{E}$ and   $C(p_1)$ the center of the  inscribed circle in the triangle   $\Delta(p_1)$
that defines the 3-periodic orbit of the billiard  associated to the confocal ellipses
 $\mathcal E$ and $\mathcal{E}_1$.  Then the locus   $\{ C(p_1):\; p_1\in \mathcal{E}\}$ is parametrized by  $ (x_c,y_c)$, where:
 
\begin{equation}
\aligned 
x_c=&\frac{x_1}{4} \frac{{b}^{2} \left( b_1^{2}({b}^{4}-{b}^{2} {a}
^{2}+4\, {a}^{4})-{b}^{6} \right) x_1^{2}+{a}^{4} \left( {b
}^{2}b_1^{2}-{b}^{4}+3\,b_1^{2}{a}^{2} \right) y_1^2}{(b_1^2x_1^2+a_1^2y_1^2) a^4b^2}\\
y_c=& \frac{y_1}{4} \frac{{b}^{4} \left(  a_1^{2}{a}^2-{a}^{4} + 3\,a_1^{2}{b}^{2}
\right) x_1^{2}+ {a}^{2} \left( a_1^{2}({a}^{4}-  {b}^{2}{a}^{2}+4\,{b}^{4})-a^6 \right) y_1^{2} }{(b_1^2x_1^2+a_1^2y_1^2) a^2b^4}
\endaligned
\end{equation}

\end{proposition}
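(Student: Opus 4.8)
The statement is just Proposition \ref{prop:ccpa} rewritten with the constants $A$, $B$, $a_{02}$, $b_{20}$ expressed through the semi-axes $a_1$, $b_1$ of $\mathcal{E}_1$, so the plan is to change variables in \eqref{eq:parac} and simplify. First I would record the elementary relations linking the two parametrizations. Write $A_2=\sqrt{a^2c^2+b^4}=\sqrt{a^4-a^2b^2+b^4}$; this is exactly the radical occurring in Proposition \ref{prop:elipponcelet}. From $A=(a^2-A_2)/(2a)$ and the formula for $b_1$ one gets
\[ b_1=\frac{b(a^2-A_2)}{a^2-b^2}=\frac{2ab\,A}{c^2},\qquad\text{i.e.}\qquad A=\frac{c^2 b_1}{2ab}, \]
and, likewise, from $B=(b^2-A_2)/(2b)$ and the formula for $a_1$,
\[ a_1=\frac{a(A_2-b^2)}{a^2-b^2}=-\frac{2ab\,B}{c^2},\qquad\text{i.e.}\qquad B=-\frac{c^2 a_1}{2ab}. \]
Squaring $A_2$ and using $c^2=a^2-b^2$ also yields the quadratic identities
\[ c^2 b_1^2-2a^2b\,b_1+b^4=0,\qquad c^2 a_1^2+2ab^2 a_1-a^4=0, \]
equivalent to $a^4-A_2^2=b^2c^2$ and $b^4-A_2^2=-a^2c^2$; I will use these together with $c^2=a^2-b^2$ and $3a^2+b^2+c^2=4a^2$.

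Next I would substitute $A=c^2 b_1/(2ab)$ and $B=-c^2 a_1/(2ab)$ into \eqref{eq:parac}. The common denominator becomes $A^2x_1^2+B^2y_1^2=\dfrac{c^4}{4a^2b^2}\,(b_1^2x_1^2+a_1^2y_1^2)$, which already produces the factor $b_1^2x_1^2+a_1^2y_1^2$ of the claimed denominators, while $a_{02}$ and $b_{20}$ turn into $\dfrac{ac^2}{8b^3}\big((3a^2+b^2)b_1-2b^3\big)$ and $\dfrac{bc^2}{8a^3}\big(2a^3-(a^2+3b^2)a_1\big)$. Feeding these into $x_c$, I would reduce the cubic $A^3$–term by means of $c^2b_1^2=2a^2b\,b_1-b^4$ (this is what cancels the spurious powers of $c$) and use $c^2=a^2-b^2$ on the coefficient of $y_1^2$; after clearing denominators the numerator collapses to $b^2\big(b_1^2(b^4-a^2b^2+4a^4)-b^6\big)x_1^2+a^4\big(b^2b_1^2-b^4+3a^2b_1^2\big)y_1^2$ over $4a^4b^2(b_1^2x_1^2+a_1^2y_1^2)$, which is precisely the asserted formula for $x_c$. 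The expression for $y_c$ follows in the same way, or at once from the symmetry $a\leftrightarrow b$, $x_1\leftrightarrow y_1$, $a_1\leftrightarrow b_1$, $c^2\leftrightarrow -c^2$ already exploited in Proposition \ref{prop:ccpa}.

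The only obstacle is the bulk of the symbolic manipulation; none of it is delicate, every cancellation being forced by the two displayed quadratic identities, and the whole computation is readily confirmed with a computer algebra system, as is done elsewhere in the paper.
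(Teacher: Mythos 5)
Your proposal is correct and follows exactly the route the paper intends: the paper's proof is the one-line remark that the statement follows from Propositions \ref{prop:elipponcelet} and \ref{prop:ccpa}, i.e.\ by substituting the expressions for $a_1,b_1$ into \eqref{eq:parac}. Your identities $A=c^2b_1/(2ab)$, $B=-c^2a_1/(2ab)$ and the quadratic relations $c^2b_1^2-2a^2b\,b_1+b^4=0$, $c^2a_1^2+2ab^2a_1-a^4=0$ are accurate and do force all the cancellations, so you have merely supplied the details the paper omits.
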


\begin{proof}
It follows from Propositions    \ref{prop:elipponcelet} and \ref{prop:ccpa}.
\end{proof}

\section{Affine curvature of plane curves }

The affine group of transformations of the plane is generated by translations and linear isomorphisms.
 
 As in the Euclidean case, it is natural to consider length and curvature of plane curves which are invariant by the special affine group.
 
Consider a regular plane curve
  $c(s)=(x(s),y(s))$ in the affine plane  $\mathbb A^2$  with the canonical area form  $\omega=dx\wedge dy$ and suppose that $[c^\prime(s),c^{\prime\prime}(s)]=1$. This is always possible when
 $c$ is a convex curve, i.e., its Euclidean curvature is positive.

Differentiating this equation it follows that
  $ [c^\prime(s),c^{\prime\prime\prime}(s)]=0$  
  and, as  $c^\prime \ne 0$, it is obtained.

$$c^{\prime\prime\prime}(s)+k_a(s)c^{\prime }(s)=0  \;\;\;\Rightarrow\;\;\; k_a=[c^{\prime\prime }(s),c^{\prime\prime\prime}(s)].$$
Above $[u,v]=u_1v_2-u_2v_1 $ 
is the determinant of   matrix formed by the vectors
   $u=(u_1,u_2)$ and  $v=(v_1,v_2)$.

The function  $k_a$  is called  the {\it   affine curvature } of  $c$. See  \cite{no} and  \cite{spivak}.
 
With respect a parametrization 
  $\gamma(u)=c(s(u))$ it follows that

$$\aligned \gamma^\prime =& c_s s^\prime \\
 \gamma^{\prime\prime} =& c_{ss} {s^\prime}^2+ c_{s} {s^{\prime\prime}}\\
  \gamma^{\prime\prime\prime} =& c_{sss} {s^\prime}^3+  3 c_{ss} {s^\prime}  {s^{\prime\prime}} + c_s s^{\prime\prime\prime}\endaligned$$

\indent where,  $c_s=dc/ds=c^\prime$, $c_{ss}=d^2c/ds^2 =c^{\prime\prime} $  and $c_{sss}=d^3c/ds^3=c^{\prime\prime\prime}.$

 Moreover,

$$\aligned {s^\prime}^3=&[\gamma^\prime,\gamma^{\prime\prime}],\\
3{s^\prime}^2s^{\prime\prime}=&[\gamma^\prime,\gamma^{\prime\prime\prime}]\\
6{s^\prime} {s^{\prime\prime}}^2+3{s^\prime}^2 s^{\prime\prime\prime}=&[\gamma^\prime,\gamma^{\prime\prime\prime\prime}]+[\gamma^{\prime\prime},\gamma^{\prime\prime\prime}].
\endaligned
$$

Now, using that 
 $k_a (s)=[c_{ss}, c_{sss}]$,  we obtain, in terms of the parametrization     $\gamma(u)$, that :

\begin{equation}
\label{eq:kafim}
k_a(u)=\frac{4[\gamma^{\prime\prime},\gamma^{\prime\prime\prime}]+[\gamma^\prime,\gamma^{\prime\prime\prime\prime}]}
{3[\gamma^\prime,\gamma^{\prime\prime }]^{\frac 53}}-\frac 59 \frac{ [\gamma^\prime,\gamma^{\prime\prime\prime}]^2}{[\gamma^\prime,\gamma^{\prime\prime }]^{\frac 83}}.
\end{equation}

%
%
%
%

\begin{remark}
The affine curvature of the ellipse $\mathcal E$ is given by ${1}/{(ab)^{\frac{2}{3}}}.$
\end{remark}

\section{Main Results  }

\begin{proposition}\label{prop:kafim}
The affine curvature 
  $k_a$ 
  of the curve of centers $\{C(p_1):\; p_1\in \mathcal{E} \} $ of the inscribed circles in triangles of an elliptic billiard is constant and is given by:
{\small 
\begin{equation}
\aligned 
k_a^3=& \frac{1}{A^2B^2},\;
A=\,{\frac {{a}^{2}-\sqrt {{a}^{4}-{a}^{2}{b}^{2}+{b}^{4}}}{2a}},\;\; B=\,{\frac {{b}^{2}-\sqrt {{a}^{4}-{a}^{2}{b}^{2}+{b}^{4}}}{2b}}.
\endaligned
\end{equation}
}

\end{proposition}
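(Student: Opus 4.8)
The plan is to use the explicit parametrization of the incenter locus provided in Proposition \ref{prop:ccpa} together with the affine curvature formula \eqref{eq:kafim}. First I would write the curve $\mathcal{E}_c$ as $\gamma(t) = (x_c(t), y_c(t))$ obtained by substituting the standard parametrization of the outer ellipse, $x_1 = a\cos t$, $y_1 = b\sin t$, into the formulas for $x_c, y_c$. Because $x_1^2/a^2 + y_1^2/b^2 = 1$, the denominator $A^2 x_1^2 + B^2 y_1^2$ becomes a trigonometric polynomial in $\cos^2 t$, and the numerators of $x_c$ and $y_c$ become $\cos t$ (resp.\ $\sin t$) times trigonometric polynomials; this makes $\gamma$ a rational function of $\cos t$ and $\sin t$, hence real-analytic and periodic. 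I note in passing that as $t$ runs over $[0,2\pi)$ the point $p_1$ traverses $\mathcal{E}$ once but the triple of vertices $\{p_1,p_2,p_3\}$ repeats, which is the origin of the ``triple covering'' remark in the introduction; for the curvature computation this is irrelevant since affine curvature is a pointwise differential invariant independent of the parametrization.

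Next I would compute the four brackets appearing in \eqref{eq:kafim}, namely $[\gamma',\gamma'']$, $[\gamma',\gamma''']$, $[\gamma'',\gamma''']$, and $[\gamma',\gamma'''']$, as functions of $t$. Each bracket is a determinant of $2\times 2$ matrices whose entries are derivatives of the rational trigonometric expressions for $x_c,y_c$; these are computed with a symbolic algebra system, as the author indicates is his practice. The key structural fact to extract is that, after simplification, $[\gamma',\gamma'']^{1/3}$ is (up to the constant $AB$) itself a trigonometric polynomial — equivalently, that the ``affine arc-length element'' $[\gamma',\gamma'']^{1/3}\,dt$ is analytic — so that the right-hand side of \eqref{eq:kafim} is a genuine rational trigonometric function rather than something with fractional powers. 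Then one substitutes everything into \eqref{eq:kafim} and simplifies.

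The claim is that after this simplification every dependence on $t$ cancels and one is left with the constant $k_a = (A^2B^2)^{-1/3}$, equivalently $k_a^3 = 1/(A^2B^2)$ with $A,B$ as in the statement (note $\sqrt{a^4 - a^2b^2 + b^4} = \sqrt{a^2c^2 + b^4} = A_2$ in the notation of \eqref{eq:param}, and these $A,B$ coincide with those of Proposition \ref{prop:ccpa}). To organize the cancellation I would first verify the identity at the four ``special'' points $p_1 = (\pm a, 0), (0,\pm b)$, where the triangle $\Delta(p_1)$ is isosceles and the geometry is explicit — this pins down the constant and serves as a sanity check — and then argue that the full expression, being a rational function of $\cos t$ whose value is forced, must be identically constant. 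A cleaner alternative, if the brackets factor nicely, is to show directly that the combination $4[\gamma'',\gamma'''] + [\gamma',\gamma'''']$ and $[\gamma',\gamma''']$ are such that \eqref{eq:kafim} telescopes; this is the route I expect the author takes.

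The main obstacle is purely computational: the expressions for $\gamma',\dots,\gamma''''$ are large, and the brackets involve products that must be reduced using $\cos^2 t + \sin^2 t = 1$ and the specific values of $A, B, a_{02}, b_{20}$ in terms of $a,b$; the risk is that intermediate expressions balloon before the cancellation occurs. The conceptual content — that the incenter locus has \emph{constant} positive affine curvature, hence is an ellipse (a classical fact: the only curves of constant affine curvature $>0$ are ellipses, see \cite{no}, \cite{spivak}) — follows immediately once the computation is carried through, and the precise value of $k_a$ then forces, via the remark that the affine curvature of an ellipse with semi-axes $\alpha,\beta$ is $(\alpha\beta)^{-2/3}$, the relation $\alpha\beta = AB$ for the semi-axes of $\mathcal{E}_c$, which is the bridge to the canonical equation established in Theorem \ref{th:cc}.
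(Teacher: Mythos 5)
Your proposal follows essentially the same route as the paper: substitute the explicit parametrization of the incenter locus (the paper uses $x_1=au$, $y_1=b\sqrt{1-u^2}$ rather than $a\cos t$, $b\sin t$, an immaterial difference) into the affine curvature formula \eqref{eq:kafim}, carry out the simplification by symbolic computation, and observe that the parameter cancels leaving $k_a^3=1/(A^2B^2)$, whence constancy and positivity give an ellipse. The only caveat is that your suggested shortcut of checking four special points and declaring the rational function ``forced'' to be constant is not by itself a proof (a nonconstant rational function can agree with a constant at finitely many points), but since your primary verification is the full symbolic simplification, as in the paper, the argument stands.
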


\begin{proof} A long calculation,  confirmed by algebraic symbolic computation, gives  that the affine curvature of $C(p_1)$ is constant and positive.
 
In fact, using the parametrization given by equation
  \eqref{eq:param}, making $x_1=au$ and  $y_1=b\sqrt{1-u^2}$  and with the expression for the   affine curvature given by equation
  \eqref{eq:kafim}, it follows that:

$$\aligned k_a^3=&-\frac{16a^2b^2U^3}{V^8}\\
U=&-36493\,{a}^{6}{b}^{18}-6561\,{a}^{22}{b}^{2}-38546\,{a}^{20}{b}^{4}-
36493\,{a}^{18}{b}^{6}-62918\,{a}^{16}{b}^{8}\\-&67282\,{a}^{14}{b}^{10}\ 
- 
74444\,{a}^{12}{b}^{12}-67282\,{a}^{10}{b}^{14}\\&-62918\,{a}^{8}{b}^{16}
-38546\,{a}^{4}{b}^{20}-6561\,{a}^{2}{b}^{22}-13122\,{a}^{24}-13122\,{
b}^{24}\\
+&2\,{  A_2}\, \left( {b}^{2}+{a}^{2} \right)  \left( 81\,{b}^{
8}-20\,{b}^{6}{a}^{2}+134\,{b}^{4}{a}^{4}-20\,{b}^{2}{a}^{6}+81\,{a}^{
8} \right)\\
&  \left( 81\,{b}^{12}+20\,{b}^{10}{a}^{2}+119\,{b}^{8}{a}^{4
}+72\,{b}^{6}{a}^{6}+119\,{b}^{4}{a}^{8}+20\,{b}^{2}{a}^{10}+81\,{a}^{
12} \right)\\
V=& \left( 27\,{a}^{8}+20\,{b}^{2}{a}^{6}+34\,{b}^{4}{a}^{4}+20\,{b}^{6}{
a}^{2}+27\,{b}^{8} \right) {  A_2}\\
-& \left( {b}^{2}+{a}^{2} \right) 
 \left( 27\,{b}^{8}-20\,{b}^{6}{a}^{2}+50\,{b}^{4}{a}^{4}-20\,{b}^{2}{
a}^{6}+27\,{a}^{8} \right)
\endaligned $$

The above equation admits the factorization
 $k_a^3=1/(AB)^2$. This was confirmed by algebraic symbolic computation.

As the affine curvature $k_a$  is constant and positive it follows that the geometric locus 
 $\mathcal{E}_c=\{  C(p_1), p_1\in\mathcal{E}\}$
is an ellipse. 
\end{proof}

\begin{theorem} \label{th:cc} The locus  $\mathcal{E}_c= \{  C(p_1), p_1\in\mathcal{E}\}$ is and ellipse with canonical equation    $x^2/A^2+y^2/B^2=1$, where

\begin{equation}\label{eq:ce}
A = \,{\frac {{a}^{2}-\sqrt {{a}^{2}c^2+{b}^{4}}}{2a}}, \;\; \; B =\,{\frac {{b}^{2}-\sqrt { {a}^{2}{c}^{2}+{b}^{4}}}{2b}}.
\end{equation}
The foci of $\mathcal{E}_c $    are   $(0,\pm \frac{c^3}{2ab}).$
In relation to the axes  $a_1$ and  $b_1$ of the smaller ellipse $\mathcal{E}_1$  
it follows that
$4a^2b^2A^2= c^4 b_1^2 $ and  $  4a^2b^2B^2=c^4  a_1^2.$ In particular,  $\frac{b_1}{a_1}=\frac{A}{|B|}.$

If  $a>b\geq \frac a4\,\sqrt {-1+\sqrt {33}}\   $ the ellipse  $x^2/A^2+y^2/B^2=1$
is contained in the region bounded  by   ellipse
$x^2/a^2+y^2/b^2=1$.
\end{theorem}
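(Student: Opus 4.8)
The plan is to read off all four assertions from the parametrization \eqref{eq:parac} of Proposition~\ref{prop:ccpa}, combined with the fact (Proposition~\ref{prop:kafim}) that $\mathcal{E}_c$ is an ellipse. Throughout I write $A_2=\sqrt{a^2c^2+b^4}=\sqrt{a^4-a^2b^2+b^4}$, so that $A=(a^2-A_2)/(2a)$ and $B=(b^2-A_2)/(2b)$; a one-line estimate using the identity $A_2^2-a^2b^2=(a^2-b^2)^2$ shows $b^2<A_2<a^2$, hence $A>0>B$ and, since also $A_2>ab$, that $B^2>A^2$.

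First I would fix the canonical equation. In \eqref{eq:parac} the rational factors multiplying $x_1/a$ and $y_1/b$ depend only on $x_1^2$ and $y_1^2$, so the map $p_1=(x_1,y_1)\mapsto(x_c,y_c)$ is odd separately in $x_1$ and in $y_1$; hence $\mathcal{E}_c$ is invariant under $(x,y)\mapsto(-x,y)$ and $(x,y)\mapsto(x,-y)$. An ellipse with both reflection symmetries is centered at the origin with axes on the coordinate axes, so $\mathcal{E}_c=\{x^2/p^2+y^2/q^2=1\}$ for some $p,q>0$. Evaluating \eqref{eq:parac} at the four boundary points $(\pm a,0)$ and $(0,\pm b)$ of $\mathcal E$ yields the points $(\pm A,0)$ and $(0,\pm B)$ of $\mathcal{E}_c$; substituting these vertices forces $p=|A|$, $q=|B|$, which is \eqref{eq:ce}. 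Because $B^2>A^2$ the major axis of $\mathcal{E}_c$ is the $y$-axis, so its focal distance is $\sqrt{B^2-A^2}$; using $4a^2b^2A^2=b^2(a^2-A_2)^2$ and $4a^2b^2B^2=a^2(A_2-b^2)^2$ one gets $4a^2b^2(B^2-A^2)=(a^2-b^2)\bigl(A_2^2-a^2b^2\bigr)=c^2(a^2-b^2)^2=c^6$, so the foci are $(0,\pm c^3/(2ab))$.

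Next I would compare with $\mathcal{E}_1$. From \eqref{eq:eponce}, $b_1=b(a^2-A_2)/c^2$ and $a_1=a(A_2-b^2)/c^2$, both positive; squaring and multiplying by $c^4$ gives $c^4b_1^2=b^2(a^2-A_2)^2=4a^2b^2A^2$ and $c^4a_1^2=a^2(A_2-b^2)^2=4a^2b^2B^2$, whence $b_1/a_1=|A|/|B|=A/|B|$. For the containment statement I would use that two concentric axis-aligned ellipses $\{x^2/A^2+y^2/B^2=1\}$ and $\{x^2/a^2+y^2/b^2=1\}$ satisfy the inclusion of the first curve into the closed region bounded by the second if and only if $|A|\le a$ and $|B|\le b$. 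Now $|A|=(a^2-A_2)/(2a)<a$ always, while $|B|=(A_2-b^2)/(2b)\le b$ is equivalent to $A_2\le 3b^2$, i.e. to $a^4-a^2b^2-8b^4\le0$; viewing the left side as a quadratic in $a^2$ with positive root $a^2=\tfrac12(1+\sqrt{33})\,b^2$, this holds iff $a^2/b^2\le\tfrac12(1+\sqrt{33})$, i.e. iff $b\ge\tfrac{a}{4}\sqrt{-1+\sqrt{33}}$, which is the hypothesis.

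Everything above is elementary once Propositions~\ref{prop:ccpa} and~\ref{prop:kafim} are in hand, so I do not expect a real obstacle, only bookkeeping. The three points deserving a little care are: the observation that the double reflection symmetry together with ``being an ellipse'' pins down the center and the axes, so that the four computed vertices determine the semi-axes; the single algebraic collapse $A_2^2-a^2b^2=(a^2-b^2)^2$, which drives the focal computation and all the sign/size comparisons; and the (one-line) inclusion criterion for concentric, axis-aligned ellipses.
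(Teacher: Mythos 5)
Your proof is correct and follows essentially the same route as the paper: Proposition \ref{prop:kafim} supplies that $\mathcal{E}_c$ is an ellipse, the parametrization \eqref{eq:parac} is evaluated at special points of $\mathcal{E}$ to identify it, and the foci, the relations to $a_1,b_1$, and the containment condition all rest on the same identity $A_2^2-a^2b^2=(a^2-b^2)^2$ and the same quadratic-in-$a^2$ inequality. The one (harmless, and slightly cleaner) deviation is that you use the parity of \eqref{eq:parac} in $x_1$ and $y_1$ to conclude that the ellipse is origin-centered and axis-aligned, so the four vertices $(\pm A,0)$, $(0,\pm B)$ suffice, whereas the paper fits a conic through the five points $\gamma(\pm a,0)$, $\gamma(0,\pm b)$, $\gamma(a/2,\sqrt{3}\,b/2)$ by solving a linear system (and also offers the direct verification $H(x_c,y_c)=0$).
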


\begin{proof} By Proposition \ref{prop:kafim} we have that the $\mathcal{E}_c$ is an ellipse.
 
This ellipse contains the five  points 
  $P_i, (i=1,\ldots 5)$, where
$P_1=\gamma(a,0),\; P_2=\gamma(-a,0),\; P_3=\gamma(0,b),\;P_4=\gamma(0,-b),\; P_5=\gamma(a/2,(\sqrt{3}/2 ) b).$

Elementary calculation, solving a linear system of equation, although long, shows that the ellipse is as affirmed.
 
With the hypothesis that  $a>b$, we have  $B^2>A^2$ and $C^2=B^2-A^2=\frac{(a^2-b^2)^3}{4a^2b^2}.$ Therefore the foci are given by   $(0,\pm C)$, where $C=c^3/(2ab).$

Another way to confirm the result is that, defining 
  $H(x,y)=x^2/A^2+y^2/B^2-1$, we obtain directly that 
$H(x_c(a\cos t, b\sin t), y_c(a\cos t, b\sin t))=0$.

The algebraic relations   $4a^2b^2A^2= b_1^2c^4$ and  $4a^2b^2B^2= a_1^2c^4$ 
follows directly from the definition of  quantities involved.

The  inequality  $B\leq b$ is  equivalent  to the   condition $ b\geq \frac a4\,\sqrt {-1+\sqrt {33}}.$ This ends the proof.
\end{proof}

\begin{proposition}\label{prop:3voltas}
The  curve  $\gamma(t)=(x_c(t), y_c(t))=$ $  (x_c (x_1,y_1),y_c(x_1,y_1)$, where  $\Gamma(t)=(x_1,y_1)=(a\cos t, b\sin t)$, $0\leq t\leq 2\pi$, is a triple covering of the ellipse    $\mathcal{E}_c$, i.e., defining $C:\mathcal{E}\to\mathcal{E}_c$ by $C(\Gamma(t))=\gamma(t)$ it follows that $C^{-1}(p_c)$ always  consists of three points for all $p_c\in\mathcal{E}_c.$

\end{proposition}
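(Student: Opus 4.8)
The plan is to realize the map $C\colon \mathcal E\to\mathcal E_c$, $\Gamma(t)\mapsto\gamma(t)$, as a finite covering of circles and then to fix the number of sheets at $3$ by pairing a geometric lower bound with an algebraic upper bound.

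\textbf{Lower bound.} First I would use that $C(p_1)$ is the incenter of the \emph{triangle} $\Delta(p_1)$, hence is unchanged when another vertex of that triangle is taken as base point: if $\Delta(p_1)=\{p_1,p_2,p_3\}$, then by the uniqueness in Proposition~\ref{prop:3bilhar} the $3$-periodic orbit through $p_2$ is again $\Delta(p_1)$, so $C(p_2)=C(p_1)$, and likewise $C(p_3)=C(p_1)$. Thus $\{p_1,p_2,p_3\}\subset C^{-1}(C(p_1))$. These three points are pairwise distinct: the two lines through $p_1$ with directions $d_{12}$ and $d_{13}$ are different (the directions differ by $2\sin\alpha\,T_1\neq0$), and since $\alpha\in(0,\tfrac{\pi}{2})$ neither direction is $\pm T_1$, so both lines are secant and meet $\mathcal E$ at a second point other than $p_1$; two distinct lines through $p_1$ cannot share that second point. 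Hence every fibre of $C$ has at least three points.

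\textbf{$C$ is a covering.} Both $\mathcal E$ and $\mathcal E_c$ are real-analytic simple closed curves — the latter by Theorem~\ref{th:cc} — and $\gamma$ is a real-analytic surjection onto $\mathcal E_c$ by definition. The only genuinely analytic input I need is that $\gamma$ is \emph{regular}, i.e. $\gamma'(t)\neq0$ for all $t$; this is already implicit in Proposition~\ref{prop:kafim}, whose proof evaluates the affine curvature of $\gamma$ through \eqref{eq:kafim}, where $[\gamma',\gamma'']$ occurs in a denominator while the outcome is a finite nonzero constant — impossible unless $[\gamma',\gamma'']$, and a fortiori $\gamma'$, is nowhere zero (equivalently, a short direct computation shows $[\gamma',\gamma'']$ has constant sign). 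A local diffeomorphism from a compact $1$-manifold onto a connected $1$-manifold is a finite-sheeted covering map, so $C$ has some number $n\ge1$ of sheets, every fibre has exactly $n$ points, and by the lower bound $n\ge3$.

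\textbf{Upper bound.} It remains to bound $n$ above by counting one fibre. Writing $x_1=a\cos t$, $y_1=b\sin t$ and substituting $x_1^{2}=a^{2}\cos^{2}t$, $y_1^{2}=b^{2}(1-\cos^{2}t)$ into \eqref{eq:parac}, every numerator and denominator appearing there becomes an affine function of $\cos^{2}t$ (note that the denominator $A^2x_1^2+B^2y_1^2$ never vanishes on $\mathcal E$); consequently $x_c$ equals $\cos t$ times a ratio of affine functions of $\cos^{2}t$, and $y_c$ equals $\sin t$ times such a ratio. Fix $p_c=(u,v)\in\mathcal E_c$ with $v\neq0$ (all points of $\mathcal E_c$ off the coordinate axes qualify, and $y_c\not\equiv0$ since $B\neq0$). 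Clearing denominators in $x_c=u$ yields a polynomial equation of degree at most $3$ in $\cos t$, hence at most three admissible values of $\cos t$; and for each such value the two candidates $t$ and $2\pi-t$ satisfy $y_c(2\pi-t)=-y_c(t)$, so at most one of them also solves $y_c=v$. Therefore $|C^{-1}(p_c)|\le3$. Combined with $n\ge3$ and the constancy of the fibre cardinality of a covering over a connected base, this forces $n=3$, which is the assertion. The one point requiring real care is the regularity $\gamma'\neq0$ underlying the covering statement; granting it, the sheet count is rigid, and the resulting $3$-to-$1$ structure is precisely the free $\mathbb Z/3$-action on $\mathcal E$ cyclically permuting the vertices of each orbit, with respect to which $C$ is invariant and $\mathcal E_c$ is identified with $\mathcal E/(\mathbb Z/3)$.
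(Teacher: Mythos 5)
Your argument is correct and reaches the conclusion by a route that overlaps with, but is not identical to, the paper's. Both proofs share two ingredients: the identification $C^{-1}(C(p_1))\supset\{p_1,p_2,p_3\}$ coming from the uniqueness of the $3$-periodic orbit through each vertex (the paper records this as ``$C^{-1}(C(p_1))=\Delta(p_1)$''), and the fact that $C$ is a local diffeomorphism, so that the fibre cardinality is constant. For the latter the paper computes the Jacobian determinant of the rational map $F(x,y)=\bigl(x\,\tfrac{ax^2+by^2}{px^2+qy^2},\,y\,\tfrac{cx^2+dy^2}{px^2+qy^2}\bigr)$ explicitly and checks it is positive away from the origin, which gives $\gamma'=DF(\Gamma)\,\Gamma'\neq 0$ immediately; you instead infer regularity of $\gamma$ from the finiteness of the constant affine curvature in Proposition~\ref{prop:kafim}. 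That shortcut is acceptable but slightly fragile --- a symbolic identity for $k_a$ does not by itself rule out isolated zeros of $[\gamma',\gamma'']$ cancelling against zeros of the numerator --- so the direct sign computation you mention parenthetically (which is essentially the paper's Jacobian calculation) is the justification to prefer. Where the two proofs genuinely differ is in pinning the degree at $3$: the paper counts the zeros of the coordinate functions $x_c(t)$ and $y_c(t)$ (six each on $[0,2\pi)$, versus two for a simple traversal), whereas you combine the lower bound $n\ge 3$ with an algebraic upper bound, observing that $x_c=u$ clears to a cubic in $\cos t$ and that $y_c$ is odd under $t\mapsto 2\pi-t$, so a fibre over a point with $v\neq 0$ has at most three elements. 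Your upper-bound step is arguably the cleaner of the two, being self-contained and not requiring one to locate the zeros of $x_c$ and $y_c$ or to appeal to a figure; the paper's zero count, on the other hand, directly exhibits the winding number $3$.
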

 \begin{figure}[htbp]
\begin{center}
  \def\svgwidth{0.50\textwidth}
    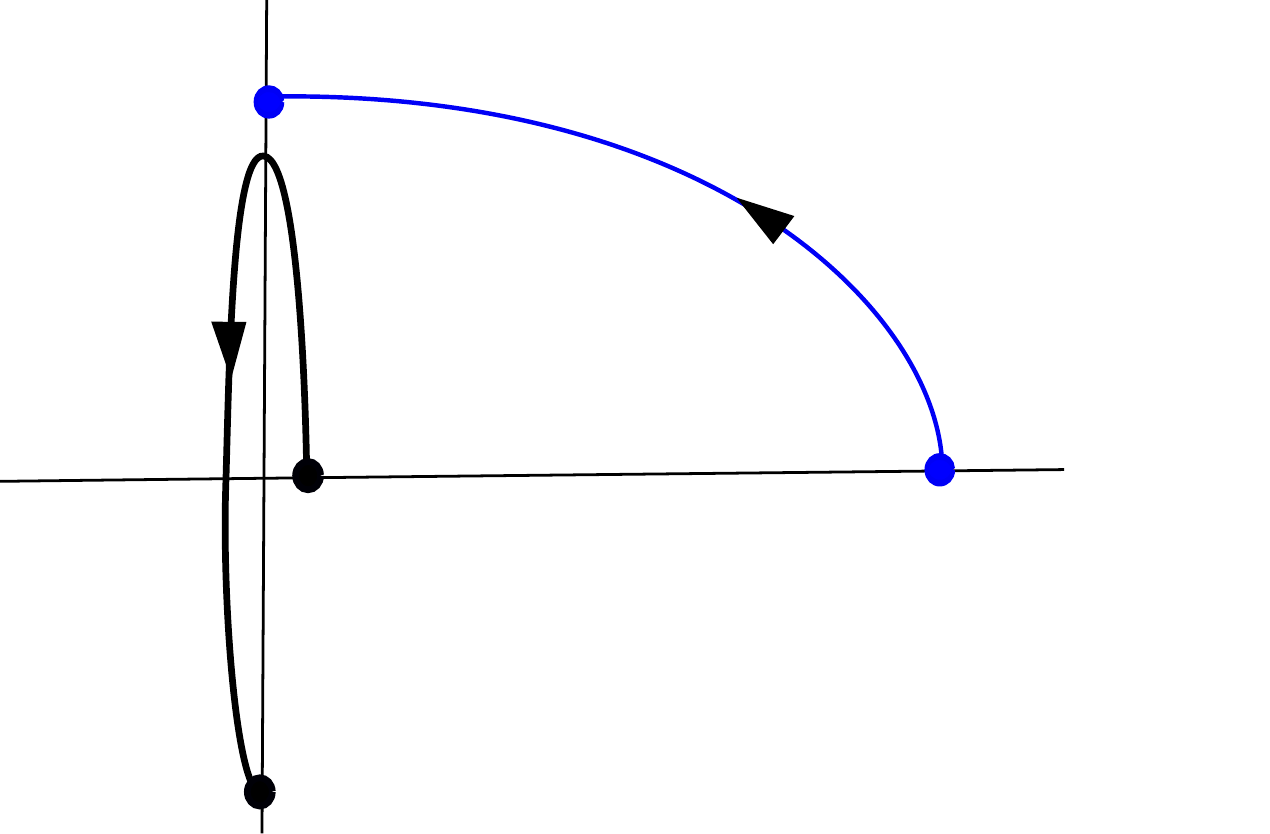
       \caption { Sketch of the ellipses  $\mathcal{E}$ (curve $\Gamma)$) and  $\mathcal{E}_c$ (curve $\gamma$ ) in the  interval  $[0,\pi/2]$.\label{fig:bilhar2} }
   \end{center}
 \end{figure}

\begin{proof} Consider a rational function  $F:\mathbb R^2\to \mathbb R^2$ of the type
$$F(x,y)=\left( x\,\frac{ax^2+by^2}{px^2+qy^2}, y\, \frac{cx^2+dy^2}{px^2+qy^2}\right)$$

We have that,
$$\text{det}({Jac(F)})=\frac{ac\,{x}^{4}+ \left( 3\,ad-bc \right) {x}^{2}{y}^{2}+bd\,{y}^{4}}{(px^2+qy^2)^2}.
$$
Applying the above result to the parametrization of ellipse $\mathcal{E}_c$ given by equation 
 \eqref{eq:parac} it follows that:
$$\aligned det(Jac(F))=&\frac{P}{Q^2}\\
P=&{-(b^4x_1^2+a^4y_1^2)(\alpha x_1^2+\beta y_1^2)}\\
\alpha=& {b}^{2} \left( 5\,{a}^{2}{b}^{6}+3\,{b}^{2}{a}^{6}+9\,{a}^{4}{b}^{4}+
12\,{a}^{8}+3\,{b}^{8} \right) \sqrt{a^4-a^2b^2+b^4} \\
-& {b}^{2} \left( 12\,{a}^{10}+4\,
{a}^{2}{b}^{8}+3\,{b}^{10}-3\,{a}^{8}{b}^{2}+12\,{b}^{4}{a}^{6}+4\,{a}
^{4}{b}^{6} \right) 
 \\
\beta=& {a}^{2} \left( 12\,{b}^{8}+5\,{b}^{2}{a}^{6}+3\,{a}^{8}+9\,{a}^{4}{b}^
{4}+3\,{a}^{2}{b}^{6} \right) \sqrt{a^4-a^2b^2+b^4}\\
-& {a}^{2} \left( 3\,{a}^{10}+4\,{a
}^{8}{b}^{2}-3\,{a}^{2}{b}^{8}+4\,{b}^{4}{a}^{6}+12\,{a}^{4}{b}^{6}+12
\,{b}^{10} \right)
 \\
Q=& 2a{b}^{3} \left( -{b}^{4}+{a}^{2}{b}^{2}-2\,{a}^{4}+2\,{a}^{2}\sqrt {{a}
^{4}-{a}^{2}{b}^{2}+{b}^{4}} \right) x_1^{2}\\
 +&2{a}^{3}b \left( -{a
}^{4}+{a}^{2}{b}^{2}-2\,{b}^{4}+2\,{b}^{2}\sqrt {{a}^{4}-{a}^{2}{b}^{2
}+{b}^{4}} \right) y_1^2.
\endaligned
$$
By algebraic manipulation we obtain that 
  $\alpha<0$ and  $\beta<0$ and therefore $det(Jac(F))>0$ for all $(x_1,y_1)\ne (0,0)$.

Moreover, we have that the function 
  $ x_c(x_1,y_1)=x_c(t)$  has six zeros, which are given  by  $x_1=0$ and  $y_1=\pm k_1(a,b)x_1$.
  The same for the function
 $y_c(x_1,y_1)=y_c(t)$ that vanishes at $y_1=0$ and  $y_1=\pm k_2(a,b) x_1$. Therefore, each coordinate function has 6 zeros in interval  $[0,2\pi)$.
 
 A typical example of this behavior is shown in  
  Fig. \ref{fig:xe} below. 

 \begin{figure}[htbp]
\begin{center}
  \def\svgwidth{0.60\textwidth}
     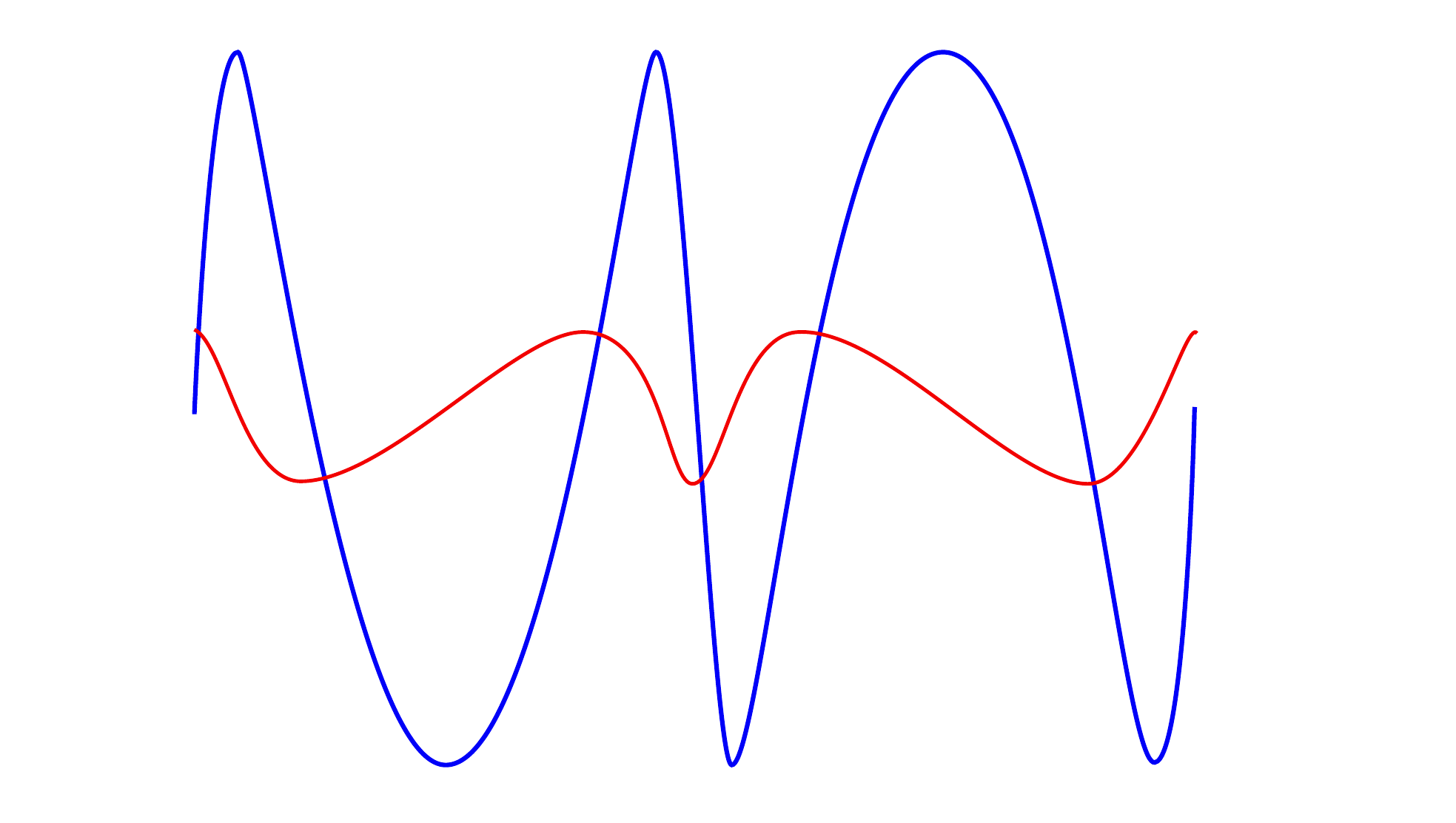
       \caption { Graphic of coordinate functions  $x_c$ and  $y_c$ of the  ellipse $\mathcal{E}_c$.\label{fig:xe}}
   \end{center}
 \end{figure}
Then $\gamma(t)=(x_c(a\cos t, b\sin t),y_c(a\cos t, b\sin t)$ is a triple covering of the   ellipse $\mathcal{E}_c$.
In fact, $C^{-1}(C(p_1))=\{p_1,p_2,p_3\}=\Delta(p_1).$

  The Fig. \ref{fig:bilhar2} shows the trace of   $\Gamma(t)=(a\cos t, b\sin t)$ and  $\gamma(t)$
  in the interval  $[0,\pi/2]$. This ends the proof.
\end{proof}

\begin{proposition}
The function 
$$F(a,b)=\left(\frac{ -a \left(b^2-  \sqrt {{b}^{4}-{a}^{2}{b}^{2}+{a}^{4}}\right)}{a^2-b^2},\frac{b \left(a^2- \sqrt {{b}^{4}-{a}^{2}{b}^{2}+{a}^{4}}\right)}{a^2-b^2}\right) $$
is a global diffeomorphism of the open region
 $R=\{(a,b) :\;  a>0, b> 0\; \text{ and}\;  a> b\}.$
%
%
\end{proposition}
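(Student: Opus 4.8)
The plan is to identify $F$ with the map $(a,b)\mapsto(a_1,b_1)$ of Proposition~\ref{prop:elipponcelet} and to replace its radical formulas by two algebraic relations they satisfy. Writing $S=\sqrt{a^4-a^2b^2+b^4}$, a short computation from \eqref{eq:eponce} gives, for every $(a,b)\in R$ and $(a_1,b_1)=F(a,b)$,
\[
a_1^2-b_1^2=a^2-b^2,\qquad\qquad \frac{a_1}{a}+\frac{b_1}{b}=1 ,
\]
the first being confocality and the second the Poncelet closure (Cayley) condition for a triangle. Conversely, for fixed $(a,b)\in R$ this system has exactly one solution in the open positive quadrant: eliminating $a_1$ produces a quadratic in $b_1$ with roots $\dfrac{b(a^2\mp S)}{a^2-b^2}$, and the ``$+$'' root forces the matching $a_1$ to be negative. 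Hence $F(a,b)$ is characterized as \emph{the unique solution of these two relations with $a_1,b_1>0$}; using $S>b^2$, $S<a^2$ and $S>ab$ (all immediate since $a>b$) one also reads off $a_1>b_1>0$, so $F(R)\subseteq R$. I would also record that $F$ is real-analytic on $R$, since the radicand equals $\bigl(a^2-\tfrac{b^2}{2}\bigr)^2+\tfrac34 b^4>0$ on $\mathbb R^2\setminus\{0\}$ and $a^2-b^2>0$ on $R$.

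Next I would prove that $F\colon R\to R$ is a bijection, using only the two relations. For injectivity, if $F(a,b)=F(a',b')=(a_1,b_1)$ then $a^2-b^2=a_1^2-b_1^2=a'^2-b'^2=:e$, and on $(\sqrt e,\infty)$ the function $a\mapsto\dfrac{a_1}{a}+\dfrac{b_1}{\sqrt{a^2-e}}$ is strictly decreasing (both summands have negative derivative), hence attains the value $1$ at most once, so $(a,b)=(a',b')$. For surjectivity, given $(a_1,b_1)\in R$ set $e=a_1^2-b_1^2>0$ and $g(a)=\dfrac{a_1}{a}+\dfrac{b_1}{\sqrt{a^2-e}}$ on $(\sqrt e,\infty)$; it is continuous and strictly decreasing with $g(a)\to+\infty$ as $a\to\sqrt e^{+}$ and $g(a)\to0$ as $a\to\infty$, so there is a unique $a^{\ast}$ with $g(a^{\ast})=1$. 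Then $b^{\ast}=\sqrt{(a^{\ast})^2-e}$ gives $(a^{\ast},b^{\ast})\in R$, and $(a_1,b_1)$ is a positive solution of the two defining relations for $(a^{\ast},b^{\ast})$; by the uniqueness above, $F(a^{\ast},b^{\ast})=(a_1,b_1)$.

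Finally, I would check that the Jacobian of $F$ is nowhere zero, which promotes the analytic bijection to a diffeomorphism via the inverse function theorem. Put $\Phi_1=a_1^2-b_1^2-a^2+b^2$ and $\Phi_2=\dfrac{a_1}{a}+\dfrac{b_1}{b}-1$, so the graph of $F$ is $\{\Phi_1=\Phi_2=0\}$. One computes $\det\partial(\Phi_1,\Phi_2)/\partial(a_1,b_1)=\dfrac{2(aa_1+bb_1)}{ab}$ and $\det\partial(\Phi_1,\Phi_2)/\partial(a,b)=\dfrac{2(a^3b_1+b^3a_1)}{a^2b^2}$, both positive on $R$, so implicit differentiation gives
\[
\det J_F=\frac{a^3b_1+b^3a_1}{ab\,(aa_1+bb_1)}>0
\]
on $R$. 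Thus $F$ is an analytic bijection of $R$ with everywhere nonsingular derivative, hence a global diffeomorphism of $R$. I expect the one genuinely computational step to be deriving the two algebraic relations from \eqref{eq:eponce}; after that the argument is soft, the point most worth double-checking being that the ``$+$'' root of the quadratic is genuinely excluded because it makes $a_1<0$. Alternatively one could expand $\det J_F$ head-on and verify it keeps a constant sign on $R$, but routing through $\Phi_1,\Phi_2$ makes both the inversion and the Jacobian transparent.
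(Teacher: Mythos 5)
Your proof is correct, and it follows the same overall strategy as the paper: both arguments replace the radical formula for $F$ by the pair of conic relations it satisfies, namely confocality $a_1^2-b_1^2=a^2-b^2$ and the closure condition $a_1/a+b_1/b=1$ (the paper writes the latter as $b\,a_1+a\,b_1=ab$), and then argue that this system pins down a unique point of $R$ on each side. Where you differ is in how uniqueness and existence are established: the paper eliminates $b$ via $b=a\,b_1/(a-a_1)$, lands on the quartic $a^4-2a^3a_1+2c^2aa_1-c^2a_1^2=0$, and appeals to the negativity of its discriminant (citing Burnside--Panton) to conclude there is a single admissible positive root; you instead set $e=a_1^2-b_1^2$ and observe that $a\mapsto a_1/a+b_1/\sqrt{a^2-e}$ is strictly decreasing from $+\infty$ to $0$ on $(\sqrt e,\infty)$, which delivers injectivity and surjectivity simultaneously with no discriminant computation and makes the exclusion of the spurious root (the ``$+$'' branch forcing $a_1<0$) explicit. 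You also obtain the nonvanishing of $\det J_F$ by implicit differentiation of the two relations, giving the closed form $\det J_F=\dfrac{a^3b_1+b^3a_1}{ab\,(aa_1+bb_1)}>0$, where the paper simply asserts the positivity follows from a direct calculation. Your route is more elementary and more self-contained (every step is checkable by hand, including the identities $S>b^2$, $S<a^2$, $S>ab$ that give $F(R)\subseteq R$), at the cost of being slightly longer; the paper's route is shorter on the page but leans on the quartic discriminant and an unverified Jacobian computation.
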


\begin{proof} Direct calculation shows  that  $det(Jac(F))>0$ in the  region  $R$  and that $F(R)\subset R$. Therefore, by Inverse Function Theorem, $F$ is a local diffeomorphism.   
To obtain the result we observe that the equation   $F(a,b)=(x,y)$ is equivalent  to the system   
 $x^2-y^2=a^2-b^2, \;\; bx+ay=ab.$ With the hypothesis  $x> y$, the two hyperbolas in the plane $ab $ have only one  transversal point of intersection in the region    $R.$
In fact, writing  $c^2=x^2-y^2>0$ we have that $b=ay/(a-x)$ and  $a$ satisfies a polynomial equation    $p(a)={a}^{4}-2\,{a}^{3}x+2\,{c}^{2}ax-{c}^{2}{x}^{2}=0$ which has only  one positive solution.
 
  This follows from the fact the discriminant of  equation above is $-432\,{x}^{4}{c}^{4} \left( -{x}^{2}+{c}^{2} \right) ^{2}<0$ (see \cite{BP}) and therefore the quartic polynomial equation  $p(a)=0$ has only two real roots and in our case the positive one is bigger than   $x$. 
\end{proof}

\section{Conclusion}

In this work, inspired by the paper of O. Romaskevich \cite{olga},  we established in Theorem \ref{th:cc}  that the centers of circles  inscribed in triangular orbits of an elliptic billiard is also an ellipse $\mathcal{E}_c$. 
  
  The main contribution of this work  it was the  calculation of   the canonical equation of  ellipse  $\mathcal{E}_c$.  
   Also it is useful to observe that the axes of ellipse   $\mathcal{E}_c$ are proportional to the axes of a smaller ellipse $\mathcal{E}_1$ (confocal to the ellipse  $\mathcal{E}$)  associated to  the 3-periodic  billiard. 
  Also it was established in Proposition \ref{prop:3voltas} that the  parametrized ellipse   $\mathcal{E}$ (curve $\Gamma$) is a triple covering of   $\mathcal{E}_c$ (curve $\gamma$).

Finally we observe that the geometric  locus of barycenters and orthocenters of  the triangles $\Delta(p_1)$ (the 3-periodic  orbits  associated to an elliptic billiard)  are also ellipses. See   \cite{ga}.

 \vskip .5cm
 {\author  
 \noindent  Ronaldo A. Garcia\\
 Universidade  Federal de Goi\'as 
\\Instituto de Matem\'atica e Estat\'\i stica  
 \\ Campus Samambaia
\\ Goi\^ania - Goi\'as - Brasil
\\ CEP 74690-900  
\\ ragarcia@ufg.br }
 
%

 \end{document}